\documentclass[11pt]{amsart}
\usepackage{amssymb}
\usepackage{graphicx} 
\usepackage{dbnsymb}
\usepackage{enumerate}
\usepackage{mathtools, multicol}
\usepackage{color,soul}
\usepackage{cite}
\usepackage{tikz}
\usepackage{hyperref}
\usetikzlibrary{matrix}
\usepackage{caption}
\usepackage{amsmath, amsthm}
\usepackage{mathtools}
 \usepackage{relsize}
 \usetikzlibrary{cd}
 \usetikzlibrary{decorations.pathreplacing}
 \usepackage{tikz,calc}
\usepackage{color}
\usepackage[margin=1.1in]{geometry}

\usepackage{multicol}

\usepackage{wrapfig}
\usepackage{cutwin}

\usepackage{lmodern}
\usepackage{enumitem}
\usepackage{stackengine}
\usepackage{appendix}
\usepackage{scrextend}

\newtheorem{thm}{Theorem}[section]

\newtheorem{prop}[thm]{Proposition}

\newtheorem{GPVthm}[thm]{GPV Theorem}

\newcommand{\theoremname}{Theorem:}

\newtheorem*{conj*}{Conjecture}

  \theoremstyle{definition}
  \newtheorem{defn}[thm]{Definition}

  \newtheorem*{claim*}{Claim}

  \newtheorem*{question*}{Question}
  \newtheorem*{answer*}{Answer}
  \newtheorem*{application*}{Application}

  \theoremstyle{remark}
  \newtheorem{rmk}[thm]{Remark}
  \newtheorem*{rmk*}{Remark}

\usepackage{bbm}
\newcommand{\Q}{\mathbb{Q}}
\newcommand{\R}{\mathbb{R}}

\newcommand{\K}{\mathbb{K}}

\newcommand{\GD}{\mathcal{GD}}
\newcommand{\LGD}{\mathcal{LGD}}

\newcommand{\IntersectingArcs}{
 \begin{tikzpicture}[scale=.2, baseline=.0mm  ]
\draw[](-2.9,0)--(2.5,0);
\draw[->] (2,0) arc (0:180:1.5);
\draw[->] (.5,0) arc (0:180:1.5);
\end{tikzpicture}
}

\newcommand{\OneArrow}{
 \begin{tikzpicture}[scale=.2, baseline=.0mm  ]
\draw[](-2.5,0)--(2.5,0);
\draw[->] (1.75,0) arc (0:180:1.75);
\end{tikzpicture}
}

\newcommand{\OneOtherArrow}{
 \begin{tikzpicture}[scale=.2, baseline=.0mm  ]
\draw[](-2.5,0)--(2.5,0);
\draw[<-] (1.75,0) arc (0:180:1.75);
\end{tikzpicture}
}
\newcommand{\OneArrowdashed}{
 \begin{tikzpicture}[scale=.2, baseline=.0mm  ]
\draw[](-2.5,0)--(2.5,0);
\draw[->,dashed] (1.75,0) arc (0:180:1.75);
\end{tikzpicture}
}

\newcommand{\OneOtherArrowdashed}{
 \begin{tikzpicture}[scale=.2, baseline=.0mm  ]
\draw[](-2.5,0)--(2.5,0);
\draw[<-,dashed] (1.75,0) arc (0:180:1.75);
\end{tikzpicture}
}

\newcommand{\LeftArrow}[1]{
 \begin{tikzpicture}[scale=.2, baseline=.0mm  ]
  \node at (-3,0) {#1};
\draw[](-2.5,0)--(2.5,0);
\draw[->] (1.75,0) arc (0:180:1.75);
\end{tikzpicture}
}

\newcommand{\RightArrow}[1]{
 \begin{tikzpicture}[scale=.2, baseline=.0mm  ]
 \node at (-3,0) {#1};
\draw[](-2.5,0)--(2.5,0);
\draw[<-] (1.75,0) arc (0:180:1.75);
\end{tikzpicture}
}
\newcommand{\LeftArrowdashed}[1]{
 \begin{tikzpicture}[scale=.2, baseline=.0mm  ]
  \node at (-3,0) {#1};
\draw[](-2.5,0)--(2.5,0);
\draw[->,dashed] (1.75,0) arc (0:180:1.75);
\end{tikzpicture}
}

\newcommand{\RightArrowdashed}[1]{
 \begin{tikzpicture}[scale=.2, baseline=.0mm  ]
  \node at (-3,0) {#1};
\draw[](-2.5,0)--(2.5,0);
\draw[<-,dashed] (1.75,0) arc (0:180:1.75);
\end{tikzpicture}
}

\newcommand{\OneArc}{
 \begin{tikzpicture}[scale=.2, baseline=.0mm  ]
\draw[](-2.5,0)--(2.5,0);
\draw[-] (1.75,0) arc (0:180:1.75);
\end{tikzpicture}
}

\newcommand{\linkArcsolidup}[2]{
 \begin{tikzpicture}[scale=.2, baseline=.0mm  ]
 \node at (-2.3,-1) {#1};
 \node at (-2.3,1.5) {#2};
\draw[](-1.5,-1)--(1.5,-1);
\draw[](-1.5,1.5)--(1.5,1.5);
\draw[->] (0,-1)--(0,1.5);
\end{tikzpicture}
}
\newcommand{\linkArcsoliddown}[2]{
 \begin{tikzpicture}[scale=.2, baseline=.0mm  ]
  \node at (-2.3,-1) {#1};
 \node at (-2.3,1.5) {#2};
\draw[](-1.5,-1)--(1.5,-1);
\draw[](-1.5,1.5)--(1.5,1.5);
\draw[<-] (0,-1)--(0,1.5);
\end{tikzpicture}
}
\newcommand{\linkArcdashup}[2]{
 \begin{tikzpicture}[scale=.2, baseline=.0mm  ]
  \node at (-2.3,-1) {#1};
 \node at (-2.3,1.5) {#2};
\draw[](-1.5,-1)--(1.5,-1);
\draw[](-1.5,1.5)--(1.5,1.5);
\draw[->, dashed] (0,-1)--(0,1.5);
\end{tikzpicture}
}
\newcommand{\linkArcdashdown}[2]{
 \begin{tikzpicture}[scale=.2, baseline=.0mm  ]
  \node at (-2.3,-1) {#1};
 \node at (-2.3,1.5) {#2};
\draw[](-1.5,-1)--(1.5,-1);
\draw[](-1.5,1.5)--(1.5,1.5);
\draw[<-,dashed] (0,-1)--(0,1.5);
\end{tikzpicture}
}

\newcommand{\pcross}{
\begin{tikzpicture}[baseline=-2.75, scale=.2]
\draw[-{Stealth[ length=1.25mm, width=1.25mm]},thick ](-1,-1)--(1,1);
\draw[-{Stealth[ length=1.25mm, width=1.25mm]},thick ](-.25,.25)--(-1,1);
\draw[thick ](.3,-.3)--(1,-1);
\end{tikzpicture}}

\newcommand{\ncross}{
\begin{tikzpicture}[baseline=-2.75,scale=.2]
\draw[-{Stealth[ length=1.25mm, width=1.25mm]},thick ](1,-1)--(-1,1);
\draw[-{Stealth[ length=1.25mm, width=1.25mm]},thick ](.25,.25)--(1,1);
\draw[thick ](-.3,-.3)--(-1,-1);
\end{tikzpicture}}

\newcommand{\smooth}{
\begin{tikzpicture}[baseline=-2.75,scale=.2]
\draw[-{Stealth[ length=1.25mm, width=1.25mm]},thick ] (-.8,-1) to[out=45,in=-45] (-1,1);
\draw[-{Stealth[ length=1.25mm, width=1.25mm]},thick ] (.8,-1) to[out=135,in=215] (1,1);
\end{tikzpicture}}

\title[Short Header Title]{Your Long, Detailed Document Title}

\title[ GPV formulas for Low Degree Coefficients of the Conway Polynomial]{Combinatorial Goussarov-Polyak-Viro Formulas for the Linking Number and Low Degree Coefficients of the Conway Polynomial}
\author{Nancy Scherich and Nathaniel Song}
\date{}

\begin{document}

\begin{abstract}
    The linking number and coefficients of the Conway polynomial  are two examples of finite type invariants. Goussarov-Polyak-Viro proved that any finite type knot invariant can be computed in a two step process, where the second step is referred to as the \emph{GPV map} for the invariant. This paper computes the GPV maps for the linking number and low-degree coefficients of the Conway Polynomial. Chmutov--Khoury--Rossi gave one description of the GPV maps for coefficients of the Conway polynomial in terms of arrow diagrams and state-sum calculations \cite{CKR}. We take a much more grounded approach using fundamental linear algebra  to describe the GPV maps in terms of a basis for the vector space of Gauss diagrams.
\end{abstract}

\maketitle

\section{Introduction}

A \emph{knot} is a closed curve in $\R^3$ with no self-intersections. A \emph{link} is a collection of two or more knots that may intertwine with each other. 
Studying knots from a topological perspective, we think of knots as being made of infinitely thin and infinitely stretchable material.
Two knots (or links) are equivalent, or ``the same", if one can be continuously deformed into the other  without cutting  or tearing. Formally, this is called \emph{ambient isotopy}. 
A knot (or link) \emph{diagram} is a projection of the knot onto a plane, with crossing information recorded to distinguish which strand passes over or under the other. See Figure \ref{fig:example knot diagrams} for example knot diagrams.
Projecting a knot in different directions, or performing an ambient isotopy before projecting, can lead to wildly different diagrams all representing the same knot. This often makes it difficult to distinguish knots and knot diagrams from each other.

\begin{figure}[h]
    \centering
    \includegraphics[width=0.35\linewidth]{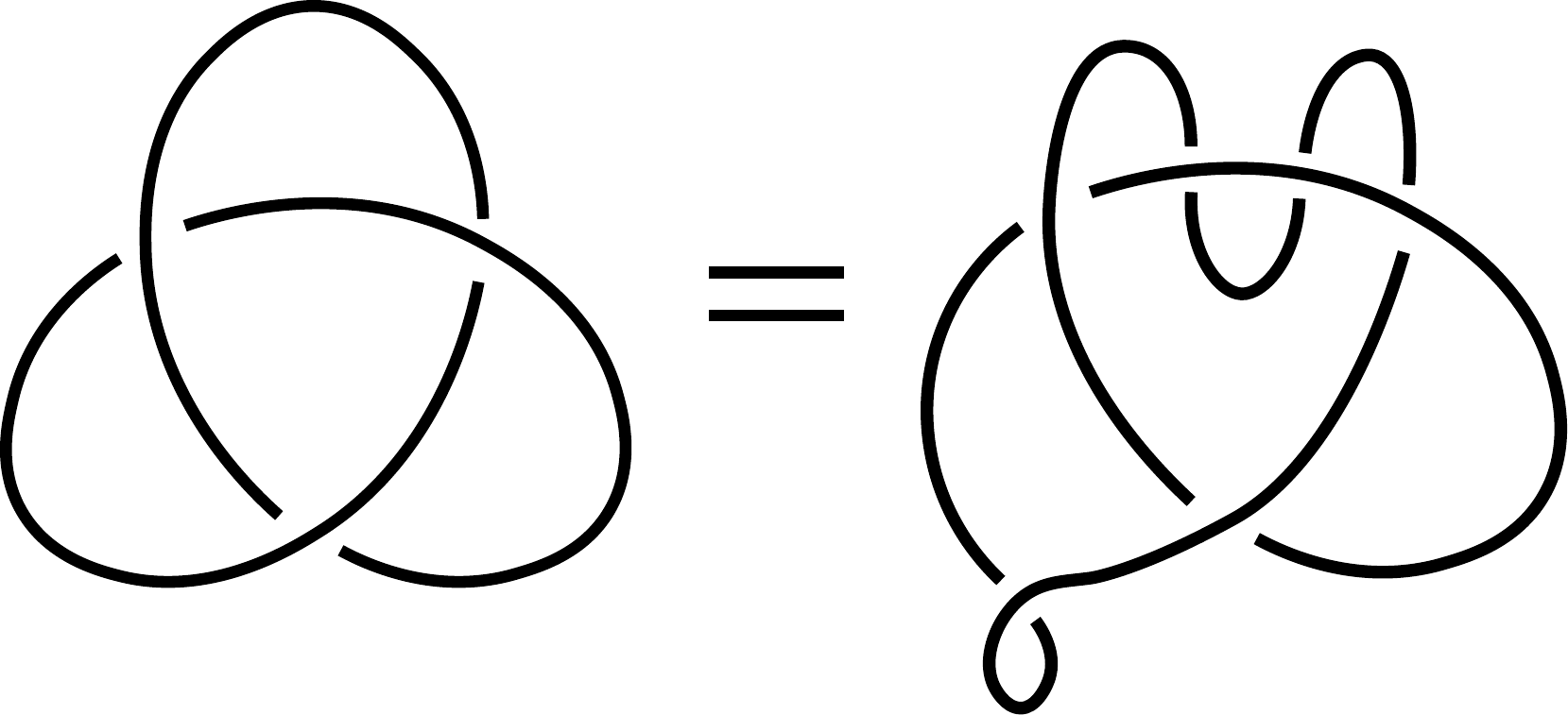}
    \caption{An example of two equivalent knot diagrams.} 
    \label{fig:example knot diagrams}
\end{figure}

The primary tools used to distinguish and classify knots are called \emph{knot invariants}. Invariants assign a value to a knot, and if two knots are ambient isotopic to each other (``the same knot"), the invariant must output the same value.
 If two knots have different invariant values, we can determine that they are truly different knots; however, when two knots share the same value from an invariant, we cannot conclude that they are the same knot. Invariants can output a number, for example the linking number or minimal crossing number, or can output a boolean (true or false) like the tri-colorability invariant. The most common type is a polynomial invariant which outputs polynomial values, for example the Jones or  Conway polynomial invariants.

There is a special class of knot invariants called \emph{finite type invariants}, also known as Vassiliev invariants, which satisfy a vanishing condition when extended to singular knots via the Vassiliev skein relation \cite{BN1}. 
\emph{Singular knot diagrams} are knot diagrams that have three types of types of crossings:  $\undercrossing$, $\overcrossing$, and a third option $\doublepoint$ called a \emph{double point}.
A knot invariant $f$ is said to be of \emph{finite type $k$} if it vanishes on all knots with at least $k+1$ double points, where $f$ is extended to singular knots by the Vassiliev skein relation:
\begin{equation}\label{vassiliev}f(\doublepoint)=f(\overcrossing) - f(\undercrossing).\end{equation}
This formula should be interpreted locally. That is, start with a knot $K$ and identify one crossing, or double point, in $K$ and call it $x$. Keeping all of $K$ fixed outside $x$, change whatever crossing was originally at $x$ to be a positive crossing, and call this new diagram $K_+$. Similarly, let $K_-$ and $K_\bullet$ be $K$ with $x$ replaced by a negative crossing or double point, respectively. Then Equation \ref{vassiliev} equates to 
\[f(K_\bullet)=f(K_+) - f(K_-).\]

Finite type invariants underlie many of the classical knot invariants; for instance, they include the coefficients of the Jones, Conway, and more generally HOMFLY-PT polynomials \cite{BL93, BN1}.
For example, the linking number of a two-component knot is a finite type invariant of type 1. 

Goussarov-Polyak-Viro (GPV) proved that all finite type invariants can be computed in a combinatorial way using Gauss diagrams \cite{GPV}. In  Section \ref{sec:GDandGPV}, we describe Gauss diagrams and the GPV result in more detail. To summarize, Goussarov-Polyak-Viro proved that any finite type knot invariant $f$ of type $k$ can be computed in a two-step process where the first step is the same regardless of $f$ (it only depends on $k$), and the second step is a map which we denote $\omega_f$ and call the GPV map for $f$. GPV maps are linear transformations on the vector space of Gauss diagrams.

The purpose of this paper is to compute the GPV maps for the linking number and low-degree coefficients of the Conway Polynomial. Chmutov--Khoury--Rossi gave one description of the GPV maps for coefficients of the Conway polynomial in terms of arrow diagrams and state-sum calculations \cite{CKR}. We take a much more grounded approach using fundamental linear algebra  to describe of the GPV maps in terms of a basis. Our main results are stated in Theorem \ref{thm:main_linking} and Theorem \ref{thm:main_Conway}.

\subsection{Acknowledgments.} This project is part of an undergraduate research experience at Elon University led by the first author with students Joshua Miller and the second author. We thank Joshua Miller for helpful contributions to the linking number section of this paper. This project is sponsored in part by NSF DMS-2532699 supporting the first author.

\section{Gauss diagrams and the GPV Theorem}\label{sec:GDandGPV}

The crossing information of a knot can be distilled and stored in combinatorial objects called \emph{Gauss Diagrams}.
Gauss diagrams depend on a parameterization of a knot and therefore require information on where the parameterization starts and stops. There are two equivalent methods to present this information: an oriented tangle with two ends escaping to $\pm \infty$, or the knot diagram includes an orientation and marked starting point via a dot on an arc.  Either scenario will be referred to as an \emph{oriented long knot diagram}, see Figure \ref{fig:GD Diagram.png} for an example.

\begin{figure}
    \centering
    \includegraphics[width=0.9\linewidth]{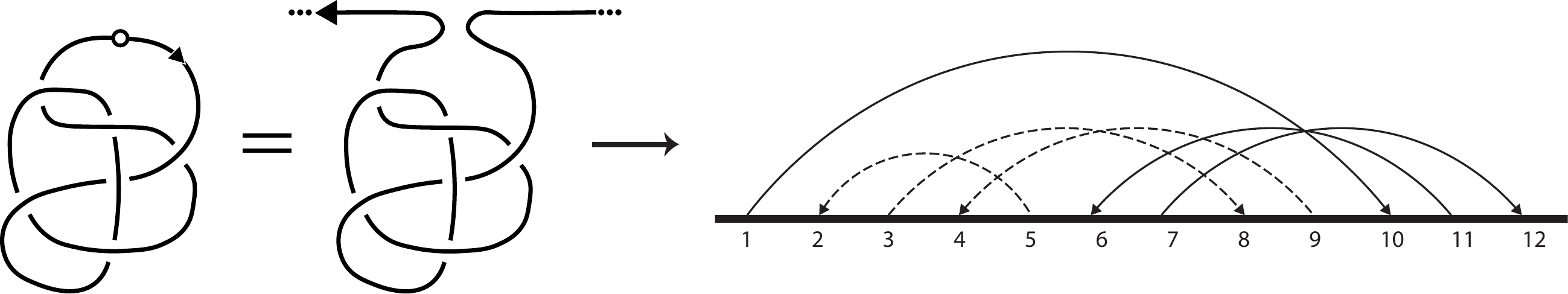}
    \caption{Example of a Gauss Diagram of a long knot.}
    \label{fig:GD Diagram.png}
\end{figure}

\begin{defn}[Gauss Diagram]
Let $K$ be an oriented long knot diagram parameterized by an interval $I\subseteq\mathbb{R}$ with $n$ crossings occurring at integer locations of the parameterization.
A \emph{Gauss diagram} of $K$ is given by the interval $I$ along with $n$ arrows, one for each crossing.  
The head of an arrow is at the point  which parametrizes the lower strand of the crossing, and the tail of the arrow is at the point which parametrizes the upper strand of the crossing. 
Each arrow is styled corresponding to the sign of the crossing: dashed for positive, solid for negative. (One can also define a Gauss diagram for a link, see Section \ref{sec:linking}.)
\end{defn} 
 We follow the convention that the interval parameterizing line of a Gauss diagram is always oriented from left to right unless otherwise marked. 
\begin{rmk}
    Changing the starting point for the parameterization changes the Gauss diagram by cyclically permuting the arrows along the parameterization line. So there is not a unique choice of Gauss diagram for a knot. However, each oriented long knot has only one Gauss diagram.
\end{rmk}

An example Gauss diagram for a long knot is shown in Figure \ref{fig:GD Diagram.png}.
Let $\GD_k$ denote the vector space of all $\Q$-linear combinations of Gauss diagrams with at most $k$ arrows. Here, scalar multiplication and diagram addition are formal and do not have inherent knot-theoretic meaning. An example element of $\GD_4$ is shown in Figure \ref{fig:k1_basis}.
When $k=0$, $\GD_0$ is a 1-dimensional space generated by the empty diagram, i.e., $\GD_0\cong \Q$.
When $k=1$, $\GD_1$ is 4-dimensional with basis 
\OneArrow, \OneOtherArrow, \OneArrowdashed, and \OneOtherArrowdashed.

In general, the number of distinct $k$-arrow diagrams can be computed by the formula $$2^{2k}\cdot \frac{(2k)!}{(2!)^kk!}.$$ Here, $\frac{(2k)!}{(2!)^kk!}$ is the total number of possible ways to arrange $k$ arrows (without heads) on a line with $2k$ endpoints (i.e., the total number of perfect matchings of a complete graph with $2k$ vertices, see \cite{stack}). Multiplying by $2^{2k}$ accounts for arrow-head orientation and positive/negative signs.
Since $\GD_k$ has all diagrams with $k$ \emph{or less} arrows, the dimension of $\GD_k$ is given by the following formula
 $$dim_\Q(\GD_k)=\sum_{m=1}^k 2^{2m}\frac{(2m)!}{(2!)^m m!}.$$

\begin{figure}[]
    \centering
    \begin{picture}(190, 50)
        \put(-100, 0){\includegraphics[width=0.8\linewidth]{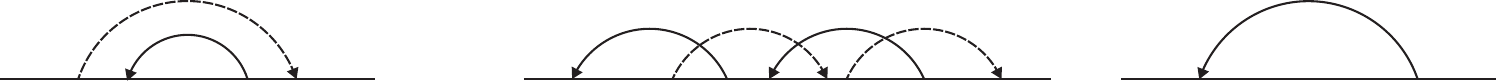}}
        \put(-110,3){3}
        \put(7,3){+ $\frac{1}{3}$}
        \put(162,3){-}
    \end{picture}
    \caption{An example vector in $\GD_4$ as a linear combination of Gauss diagrams.}
    \label{fig:k1_basis}
\end{figure}

\subsection{The GPV Theorem}

Goussarov-Polyak-Viro proved that finite type invariants can be decomposed using a subdiagram map denoted by $\varphi_k$. To understand the map $\varphi_k$ and the GPV theorem, we first need to understand subdiagrams of Gauss diagrams.

\begin{defn}[subdiagram]\label{def:subdiag} Given a Gauss diagram $D$ with $n$ arrows, a \emph{$k$-arrow subdiagram} of $D$ is found by following the steps below.
\begin{enumerate}
    \item Choose $k$ arrows from $D$ to keep.
    \item Remove all other $n-k$ arrows from $D$.
    \item Without changing the relative positions of the remaining $k$ arrows and endpoints, reparameterize the interval so the remaining diagram is a Gauss diagram with $k$ arrows and $2k$ endpoints. This diagram is a $k$-arrow subdiagram of $D$.
\end{enumerate}
\end{defn}

An example subdiagram is shown in Figure \ref{fig:decomposition}. It is important to note that because of the reparameterization, the subdiagram ``forgets" how it sat inside the original diagram. So without additional context, the process to create a subdiagram is not uniquely reversible to recover the original diagram.

\begin{figure}[h]
\centering
    \includegraphics[width=0.85\linewidth]{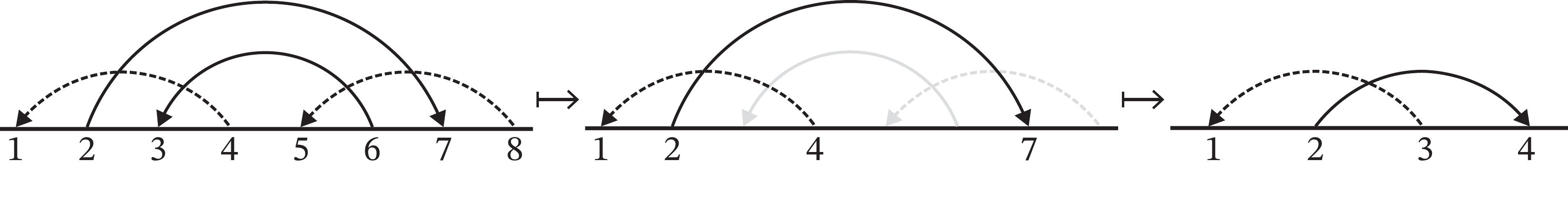}
    \caption{An example Gauss diagram and process to find a single 2-arrow subdiagram.}
    \label{fig:subiagram}
\end{figure}

\begin{defn}[$\varphi_k$]
    Let $\K$ be the set of all long knot diagrams. There is a subdiagram map $\varphi_k:\K\rightarrow \GD_k$ that takes in input a long knot diagram $K$, and outputs the sum of all of the subdiagrams of the Gauss diagrams for $K$ which have $k$ or less arrows.
\end{defn} 
The $\varphi_k$ map should be viewed as a decomposition map, or perhaps a factoring map, that breaks a long knot's Gauss diagram into a sum of its small pieces. See Figure \ref{fig:decomposition} for an example computation of $\varphi_k$ with $k=2$ .

\begin{figure}[h!]
    \centering
    \begin{picture}(190, 70)
        \put(-100,0){\includegraphics[width=0.90\linewidth]{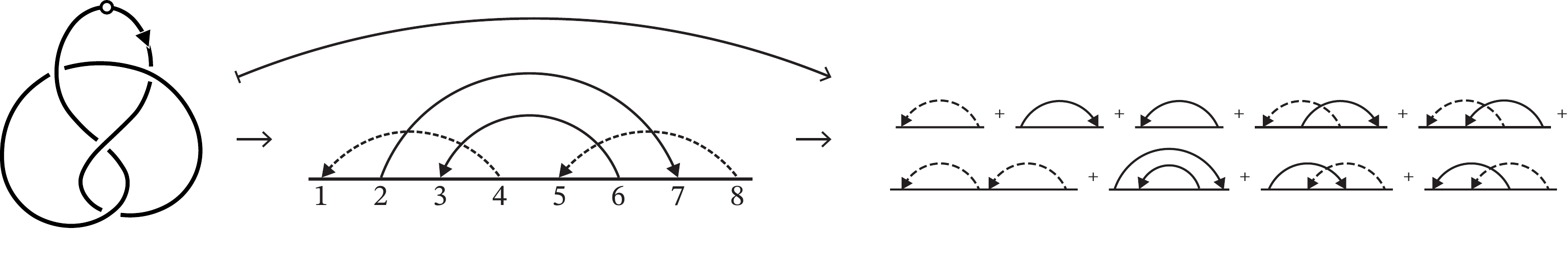}}
        \put(128,32){\scalebox{0.8}{2}}
        \put(33, 66){$\varphi_2$}
    \end{picture}
    \caption{An example computation of $\varphi_2$ applied to the figure-eight long knot.}
    \label{fig:decomposition}
\end{figure}

The map $\varphi_k$ is far from surjective onto $\GD_k$ as, for one, the coefficients of the outputs of $\varphi_k$ are positive integers. For a more interesting example (with signs omitted), let $k\geq 2$ and consider the diagram $\IntersectingArcs$. Suppose there exists some knot $K$ so that $\IntersectingArcs$ is a 2-arrow subdiagram of the Gauss diagram for $K$. Then the Gauss diagram for $K$ must also have $\OneArrow$ as a 1-arrow subdiagram (occurring at least twice), so
$\varphi_k(K)$ would have to contain $\IntersectingArcs$ and $2\OneArrow$ as summands. Therefore $\IntersectingArcs$ is not in the image of $\varphi_k$ as a standalone diagram.

In general, the map $\varphi_k$ is not injective either. The intuitive reason being that if the number of crossings in the knot is some amount larger than $k$, the map $\varphi_k$ does not maintain enough information as to how the crossings are arranged in $K$. For example, let $k=1$,  the trefoil (shown in Figure \ref{fig:UsedKnots}) and an unknot with three positive twists will both output $\OneOtherArrow+2\OneArrow$ under $\varphi_1$.

Goussarov-Polyak-Viro in \cite{GPV}  gave a combinatorial description of all finite type knot invariants via a two-step factorization using the $\varphi_k$ map (called here the GPV Theorem). We choose to follow Roukema's restatement of the GPV Theorem from \cite{Roukema}.

\begin{GPVthm} [Goussarov-Polyak-Viro  \cite{GPV}]\label{thm:FactorsThruPhi} 
A $\Q$-valued knot invariant $f$ is of type $k$ if and only if there is a linear transformation $\omega_f$ on $\mathcal{GD}_k$ such that $f=\omega_f\circ\varphi_{k}$.
\end{GPVthm}
\begin{center}
\begin{tikzcd}
\mathbb{K} \arrow[r,"\varphi_k",swap]\arrow[rr, "f", near start, controls={+(1.7,1.3) and +(-.5,0.8)}]\arrow[rr,"\circlearrowleft",swap, phantom, controls={+(1,1) and +(-.5,0.5)}] &\GD_k\arrow[r,"\omega_f",swap] & \Q\\
\end{tikzcd}
\end{center}

In summary, the GPV Theorem outlines a two-step process used to compute all finite type knot invariants; finite type invariants factor through the same $\varphi_k$ map, followed by a linear map $\omega_f$ that differs depending on the finite type invariant $f$. We will call $\omega_f$ the \emph{GPV map} for invariant $f$.

The GPV map $\omega_f:\GD_k\rightarrow \Q$ is a linear transformation of vector spaces, that is
\[ \omega_f(aD_1+bD_2)=a\omega_f(D_1)+b\omega_f(D_2)\]
for all $a,b\in \Q$ and $D_1,D_2\in \GD_k$.
The focus of this paper is to write formulas to compute $\omega_f$ for different finite type invariants. As a linear transformation,
 $\omega_f$ is completely defined by its outputs on a basis for $\GD_k$.
Our method in Section \ref{sec:conway}  is to determine the values of $\omega_f$ on a basis for $\GD_k$ for low degree coefficients of the Conway polynomial.

Before we discuss the Conway polynomial, in Section \ref{sec:linking} we will first look at the linking number, which is not a knot invariant, but a link invariant.
For link invariants, the space of Gauss diagrams is a bit bigger and more complicated. So in the next section, we investigate Gauss diagrams for links and consider the GPV theorem for the linking number.

\section{The GPV map for the linking number}\label{sec:linking}

The linking number, denoted $lk$, is a link invariant that counts how many times two components link with each other. 
To compute the linking number of two oriented components $K_1$ and $K_2$ of a link, ignore all crossings except for the crossings between the two chosen components. 
The linking number, $lk(K_1,K_2)$, is the number of positive crossings between the two components minus the number of negative crossings between the two components, all times one half.

It can be quickly checked that the linking number is a finite type invariant of type 1 (vanishes on links with 2 or more double points).
The GPV theorem is stated for \emph{knot} invariants, but it turns out the theorem is true for the linking number after some slight modification. 
To understand the GPV theorem for links, we need to understand  Gauss diagrams for a link and the vector space of link Gauss diagrams.

As with knots, we will work with oriented long link diagrams where the starting point of the parameterization for each component is marked with a point on an arc of each component diagram.
A Gauss diagram for a long link has one parameterized interval for each component, arrows between the two intervals represent a crossing between two components, and the usual arrows on the same interval represent crossings within one component. As for Gauss diagrams for knots, there are two types of arrows, dashed and solid, for positive and negative crossings. See Figure \ref{fig:link_gd} for an example Gauss diagram for a link with three components.

\begin{figure}[h]
    \centering
    \includegraphics[width=.4\linewidth]{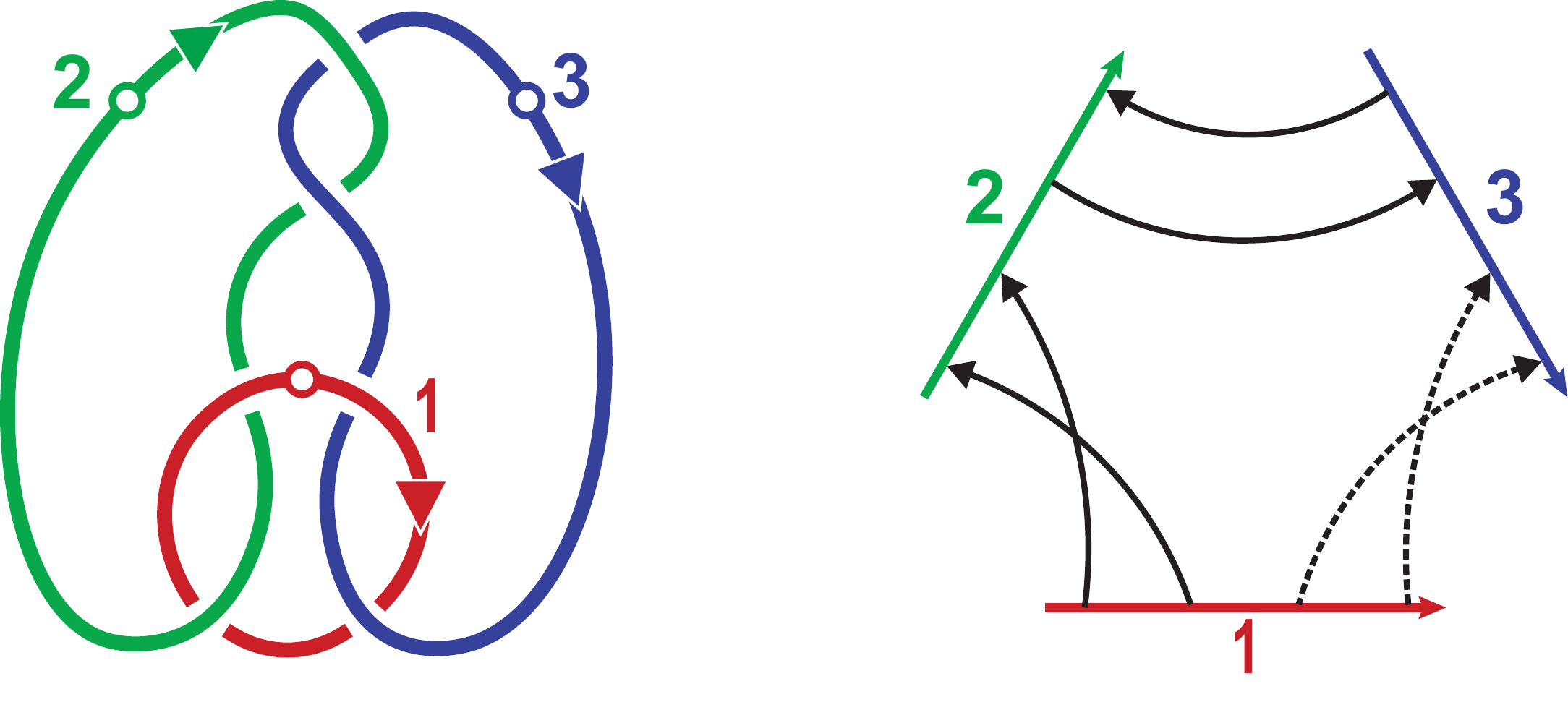}
    \caption{A long link with three components and its Gauss diagram.}
    \label{fig:link_gd}
\end{figure}

Since links can have an arbitrary number of components, the Gauss diagrams for links can have an arbitrary number of parameterized intervals. We will assume the components of a link come with an ordering, or a numbering, and we number the parameterized intervals of the Gauss diagram correspondingly. For ease in computation, when computing the linking number, we can only compute the linking number of two components at a time. So, we will assume the two components for which we are computing the linking number are numbered first in the ordering. That is, given a link $L$ with components $L_1,\cdots , L_k$, we will assume (possibly up to a renumbering) that we are interested in computing the linking number between $L_1$ and $L_2$.

Let $\LGD_k$ denote the vector space of $\Q$-linear combinations of link Gauss diagrams with $k$-arrows (there is no bound on the number of link components). The Gauss diagrams in $\LGD_k$ are much more complicated than those in $\GD_k$, as they may have more than one parameterized interval (one for each link component), and each parameterized interval is numbered.
Notice that $\GD_k$ is a subspace of $\LGD_k$, as every knot can be viewed as a link with only one component that is numbered by a 1.
See Figure \ref{fig:ex_LGD_2} for an example element in $\LGD_3$.
\begin{figure}[]
    \centering
    \begin{picture}(190, 70)
        \put(-40,0){\includegraphics[width=0.60\linewidth]{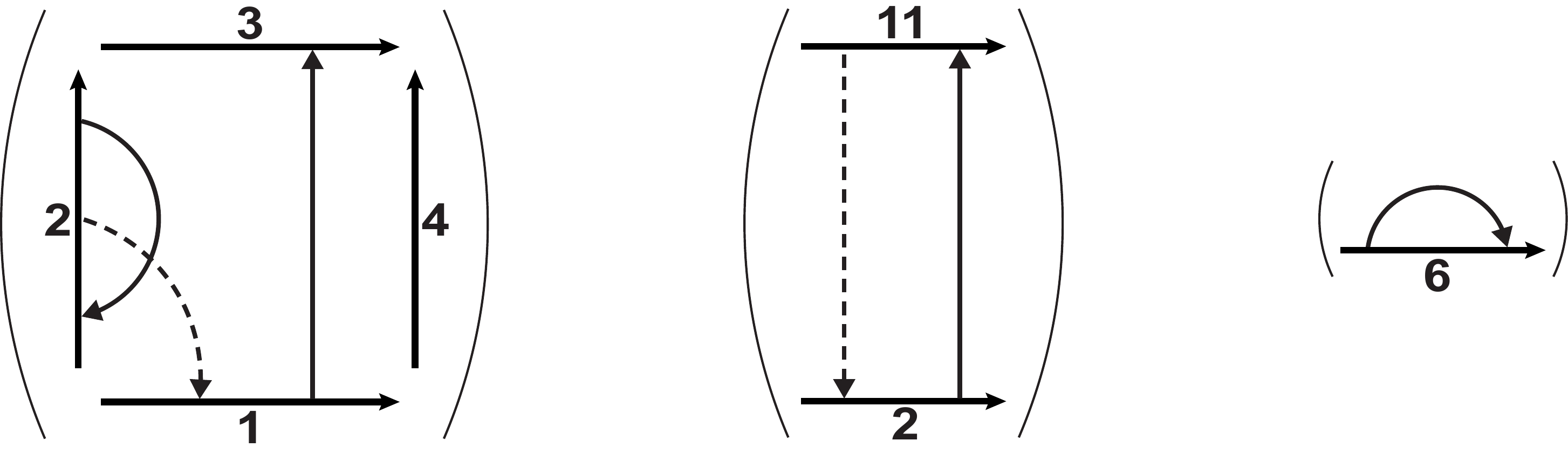}}
        \put(-50,39){5}
        \put(66,39){- \; 2}
        \put(166,39){+\space\space$\frac{4}{3}$}
    \end{picture}
    \caption{An example element in $\LGD_3$}
    \label{fig:ex_LGD_2}
\end{figure}

Let $\mathcal{L}$ denote the space of all long link diagrams. Subdiagrams of link Gauss diagrams are found by following the same procedure as in Definition \ref{def:subdiag}, where each component interval is reparameterized or deleted entirely if there are no arrows on that component. Thus the subdiagram map $\varphi_k$ extends to $\mathcal{L}$ in a natural way, and we can view $\varphi_k:\mathcal{L}\rightarrow \LGD_k$.

Since the linking number is of type 1, we focus our attention on $\LGD_1$. As a vector space, $\LGD_1$ has a basis consisting of all diagrams of the form
\begin{center}
 \linkArcsolidup{n}{m}, \linkArcdashup{n}{m}, \RightArrow{n}, \RightArrowdashed{n} \LeftArrow{n}, and \LeftArrowdashed{n} 
 \end{center}
 for all natural numbers $n\neq m$, which is a countably infinite basis. The numeric labels in front of the parameterizing line are not coefficients but recording the label of the component.

Since the linking number is computed by locally summing values (signs of crossing) one crossing at a time, it is straightforward to identify the GPV map $\omega_{lk}$.
Since crossings between the same component do not contribute to the linking number, $\omega_{lk}$ must map all one-arrow diagrams on the same component to 0; $\omega_{lk}(\OneArc)=0$ for any choice of signed arrow, orientation of the arrow head, and numbered labeling of the component.
Any positive crossing between the first two labeled components contributes $\frac{1}{2}$ to the linking number, and any negative crossing between the first two labeled components contributes $-\frac{1}{2}$ to the linking number. Thus we define $\omega_{lk}:\LGD_1\rightarrow \Q$ on a basis for $\LGD_1$ as follows:

\begin{align*}   
    \omega_{lk}\left(\linkArcsolidup{1}{2}\right)&=\omega_{lk}\left (\linkArcsoliddown{1}{2}\right)=-\frac{1}{2}\\
    \omega_{lk}\left(\linkArcdashup{1}{2}\right)&=\omega_{lk}\left(\linkArcdashdown{1}{2}\right)=\frac{1}{2}\\
    \omega_{lk}\left(\linkArcsolidup{n}{m}\right)&=\omega_{lk}\left (\linkArcdashup{n}{m}\right)=0, \\
    \omega_{lk}\left(\LeftArrow{n}\right)&=\omega_{lk}\left (\RightArrow{n}\right)=\omega_{lk}\left(\LeftArrowdashed{n}\right)=\omega_{lk}\left (\RightArrowdashed{n}\right)=0\\
    \end{align*}
    
where $n$ and $m$ are any natural numbers so that $\{n,m\}\neq \{1,2\}$.
By construction, we get the following result.
\begin{thm}\label{thm:main_linking}
    The GPV theorem is true for the linking number using the map $\omega_{lk}$. That is, Let $L$ be a link with labeled components $L_1, \cdots, L_k$, then  $lk(L_1,L_2)=\omega_{lk}\circ \varphi_1(L)$.

\end{thm}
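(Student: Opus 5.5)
The plan is to compute $\varphi_1(L)$ explicitly and then apply $\omega_{lk}$ term by term, using linearity. Fix a long link diagram $L$ with ordered components $L_1,\dots,L_k$ and let $D$ be its Gauss diagram. By definition, $\varphi_1(L)$ is the sum of the subdiagrams of $D$ with at most one arrow. These consist of the empty diagram and one one-arrow subdiagram for each crossing $c$ of $L$. The empty-diagram term needs separate attention, since $\omega_{lk}$ as listed is only specified on one-arrow basis elements. I will take $\omega_{lk}$ of the empty diagram to be $0$, which is consistent with the linking number of an unlinked, crossingless diagram being $0$. Alternatively, one can regard $\LGD_1$ as spanned by one-arrow diagrams, as the basis listed in the paper suggests. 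With that convention, $\varphi_1(L)=\sum_{c} D_c$, where $D_c$ is the one-arrow subdiagram obtained by keeping only the arrow of $c$.

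Next I identify $D_c$ as one of the listed basis elements. When all other arrows are removed, every interval carrying no arrow is deleted, and the remaining intervals are reparameterized but keep their component labels. There are two cases. If $c$ is a self-crossing of $L_n$, then $D_c$ is one of $\LeftArrow{n}$, $\RightArrow{n}$ and their dashed versions, and $\omega_{lk}(D_c)=0$. If $c$ is a crossing between $L_n$ and $L_m$ with $n\neq m$, then $D_c$ is a single arrow between intervals labeled $n$ and $m$. Its style is dashed exactly when $c$ is positive, and its direction records which strand is over. By the definition of $\omega_{lk}$, $\omega_{lk}(D_c)=\tfrac12\operatorname{sign}(c)$ if $\{n,m\}=\{1,2\}$, and $\omega_{lk}(D_c)=0$ otherwise. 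The one point to check is that the formulas for $\omega_{lk}$ cover both arrow directions and both placements of the labels $1,2$ (bottom or top). The displayed definition lists both the up and down directions for labels $1,2$, and drawing with label $2$ on the bottom is the same diagram up to relabeling the picture.

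Finally, by linearity,
\[
\omega_{lk}\circ\varphi_1(L)=\sum_c \omega_{lk}(D_c)=\tfrac12\Bigl(\#\{\text{positive crossings between } L_1, L_2\}-\#\{\text{negative crossings between } L_1, L_2\}\Bigr),
\]
which is the stated definition of $lk(L_1,L_2)$. Because the linking number is already a link invariant, no Reidemeister-move argument is needed: the identity holds diagram by diagram.

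The main obstacle is bookkeeping rather than mathematics. I need to confirm that the basis list, together with the empty-diagram convention, genuinely covers every one-arrow link subdiagram, so that $\omega_{lk}$ is well defined on all of $\varphi_1(L)$. The rest is immediate from the construction.
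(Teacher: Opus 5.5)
Your proposal is correct and is the paper's own argument, which the paper compresses into ``by construction'': $\varphi_1(L)$ is the sum of one one-arrow subdiagram per crossing, and $\omega_{lk}$ was defined to give $\pm\frac{1}{2}$ exactly on arrows between components $1$ and $2$ (dashed for positive) and $0$ on everything else. Your explicit handling of the empty-diagram term, where $\omega_{lk}=0$ is in fact forced by the crossingless two-component unlink, and of the arrow-direction and label-placement bookkeeping supplies details the paper leaves implicit.
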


\section{The GPV maps  for low-degree coefficients of the Conway polynomial}\label{sec:conway}

The Conway polynomial, also called the Alexander-Conway polynomial, of a knot $K$, denoted $\nabla(K)$, is a polynomial in variable $z$ which can be computed using the Conway skein relation as follows. Let $K$ be an oriented knot or link diagram, and let $x$ be a crossing in $K$. There are three related diagrams to $K$ found by replacing the crossing $x$ with a positive crossing \pcross, a negative crossing \ncross, or smoothing the crossing \smooth \text{ }, which reduces the complexity of the diagram. Denote these three diagrams by $K_+$, $K_-$, and $K_0$ respectively (the original $K$ will be equal to either $K_+$ or $K_-$).
The Conway skein relations are given by the following two equations
\begin{align*}
\nabla(\text{unknot})=\nabla(\text{unlink})&=1\\
\nabla(K_+)-\nabla(K_-)&=z\nabla(K_0)
\end{align*}

In practice, to compute $\nabla(K)$, one starts with a diagram of $K$ and systematically smooths one crossing at a time (creating a resolving tree) until all that is left is a collection of unlinked unknots. Using the skein relation, one recursively builds the polynomial for $K$ by working up the tree.

For a knot $K$, let $V_k(K)$ be the $k$'th coefficient of $z$ of the Conway polynomial $\nabla(K)$. 
Bar-Natan proved that $V_k$ is a finite type invariant of type $k$, see \cite{BN95}[example on page 19] see also \cite{BN1}.
It follows from the GPV theorem that for each $k$, there exists a map $\omega_{V_k}:\GD_k\rightarrow \Q$ so that $V_k=\omega_{V_k}\circ \phi_k$.
In this section, we compute the $\omega_{V_k}$ for several values of $k$.

Firstly, Kauffman proved that the Conway polynomial of a knot has only even powers of $z$ \cite{KAUF81}[Proposition 4.1]. 

\begin{prop}
    For $k$ odd, $\omega_{V_k}$ is the zero map.
\end{prop}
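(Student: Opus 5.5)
The plan is to combine Kauffman's parity result with the fact that the GPV map is only required to satisfy the factorization $V_k=\omega_{V_k}\circ\varphi_k$ on knots. First, I would invoke \cite{KAUF81}[Proposition 4.1]: for any knot $K$, $\nabla(K)$ contains only even powers of $z$. So for odd $k$ the coefficient $V_k(K)$ is $0$ for every knot $K$, and $V_k$ is the identically zero invariant on $\K$. This holds for every long knot diagram, since the Conway polynomial of a long knot is that of its closure.

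Next, I would check that the zero linear transformation $0:\GD_k\to\Q$ satisfies the GPV factorization. For every long knot diagram $K$, we have $0\circ\varphi_k(K)=0=V_k(K)$. Hence the zero map is a valid choice of $\omega_{V_k}$, and this choice is consistent with the GPV Theorem \ref{thm:FactorsThruPhi}. Indeed, the zero invariant is trivially of type $k$ (it vanishes on everything), so the theorem guarantees that some $\omega_{V_k}$ exists, and the zero map is one explicitly.

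The only real subtlety, and the point I expect to need the most care, is the meaning of ``the'' GPV map. As the paper observes, $\varphi_k$ is far from surjective, so $\omega_f$ is determined only on the span of $\varphi_k(\K)$. It is not determined on all of $\GD_k$, so the proposition should be read as saying that $\omega_{V_k}$ may be taken to be the zero map. I would state this explicitly. I would also note that any other valid $\omega_{V_k}$ must vanish on $\mathrm{span}\,\varphi_k(\K)$, because for any finite linear combination $\sum a_i\varphi_k(K_i)$, linearity gives $\omega_{V_k}\bigl(\sum a_i\varphi_k(K_i)\bigr)=\sum a_i V_k(K_i)=0$. So every admissible GPV map for odd $k$ agrees with the zero map wherever it is actually constrained, and taking it to be zero on a complementary subspace gives the stated result.
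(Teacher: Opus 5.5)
Your proof is correct and follows the paper's argument: Kauffman's parity result makes $V_k$ identically zero on knots for odd $k$, so the zero map satisfies $V_k=\omega_{V_k}\circ\varphi_k$. Your added caveat is a genuine sharpening, because the paper's phrase ``must also be the zero map'' overstates uniqueness: the factorization only forces $\omega_{V_k}$ to vanish on $\mathrm{span}\,\varphi_k(\K)$, and for $k\ge 3$ this span is a proper subspace of $\GD_k$, so nonzero choices of $\omega_{V_k}$ also work.
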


\begin{proof}
    Since $\nabla$ has coefficients of $0$ for every odd-degree term, $V_k=0$ is the zero invariant. So,  $\omega_{V_k}$ must also be the zero map.
\end{proof}

When $k=0$, $\GD_0$ is generated by the empty diagram and $\omega_{V_0}$ is also very easy to compute.

\begin{prop}
The map $\omega_{V_0}$ is the coefficient map that sends $t*(\text{empty diagram})\mapsto t$.
      
\end{prop}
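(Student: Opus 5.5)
To prove the proposition, I would show that $V_0$ is the constant invariant $1$ on knots, and then read off $\omega_{V_0}$ from the factorization $V_0=\omega_{V_0}\circ\varphi_0$ given by the GPV Theorem.

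\textbf{Step 1: $V_0\equiv 1$ on knots.} Take a knot diagram $K$ and choose crossings whose changes turn $K$ into a diagram of the unknot; such a set of crossings always exists. At each crossing change, apply the skein relation
\[\nabla(K_+)-\nabla(K_-)=z\nabla(K_0).\]
The right-hand side is $z$ times a polynomial, so it has zero constant term. Hence the constant term of $\nabla$ does not change under a crossing change. Since $\nabla(\text{unknot})=1$, we get $V_0(K)=1$ for every knot $K$.

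\textbf{Step 2: compute $\varphi_0$.} For any long knot diagram $K$, the only subdiagram with at most $0$ arrows is obtained by deleting every arrow. There is exactly one such choice, so $\varphi_0(K)=1\cdot(\text{empty diagram})$.

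\textbf{Step 3: pin down $\omega_{V_0}$.} The space $\GD_0\cong\Q$ is spanned by the empty diagram, so the linear map $\omega_{V_0}$ is determined by its value on that diagram. From the factorization,
\[1=V_0(K)=\omega_{V_0}(\varphi_0(K))=\omega_{V_0}(\text{empty diagram}).\]
By linearity, $\omega_{V_0}(t\cdot\text{empty diagram})=t$, which is the coefficient map in the statement. Conversely, this map composed with $\varphi_0$ is identically $1=V_0$, so it does satisfy the factorization.

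The only substantive point is Step 1, namely that the constant term of $\nabla$ survives crossing changes. This follows directly from the form of the skein relation; everything else is bookkeeping.
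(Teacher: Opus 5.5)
Your argument is correct and matches the paper's. Both evaluate the factorization $V_0=\omega_{V_0}\circ\varphi_0$ on a knot, observe that $\varphi_0$ returns exactly the empty diagram, and conclude by linearity on the one-dimensional space $\GD_0$. The paper skips your Step 1: it evaluates only at the unknot, where $V_0=1$ is just the normalization, and it takes the existence of $\omega_{V_0}$ from the GPV Theorem. So Step 1 is not the substantive point you say it is, though it does make your converse check independent of GPV.
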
\begin{proof}
For $U$ the unknot, $\nabla(U)=1$, so $V_0(U)=1$. Since $\varphi_0(U)$ is the empty diagram, then \[1=\nabla(U)=\omega_{V_0}\circ \varphi_0(U)=\omega_{V_0}(\text{empty diagram}).\]

Thus, $\omega_{V_0}$ is the map $\GD_0\rightarrow \Q$ that sends $t*(\text{empty diagram})\mapsto t$.

\end{proof}

\subsection{Quadratic Conway coefficient}
The quadratic coefficient of the Conway polynomial for a knot $K$ is called $V_2(K)$ and is a finite type invariant of type 2.

When $k=2$, $\GD_k$ is 52-dimensional with the basis shown in Figure \ref{fig:Two arrow diagrams}. This basis consists of 48 double-arrow diagrams and 4 one-arrow diagrams from $\GD_1$.  The 48 double-arrow diagrams are evenly divided into three different categories: disjoint, interleaved, and nested. 

\begin{figure}
    \centering
    \begin{picture}(290, 100)
     \put(0,0){ \includegraphics[width=0.70\linewidth]{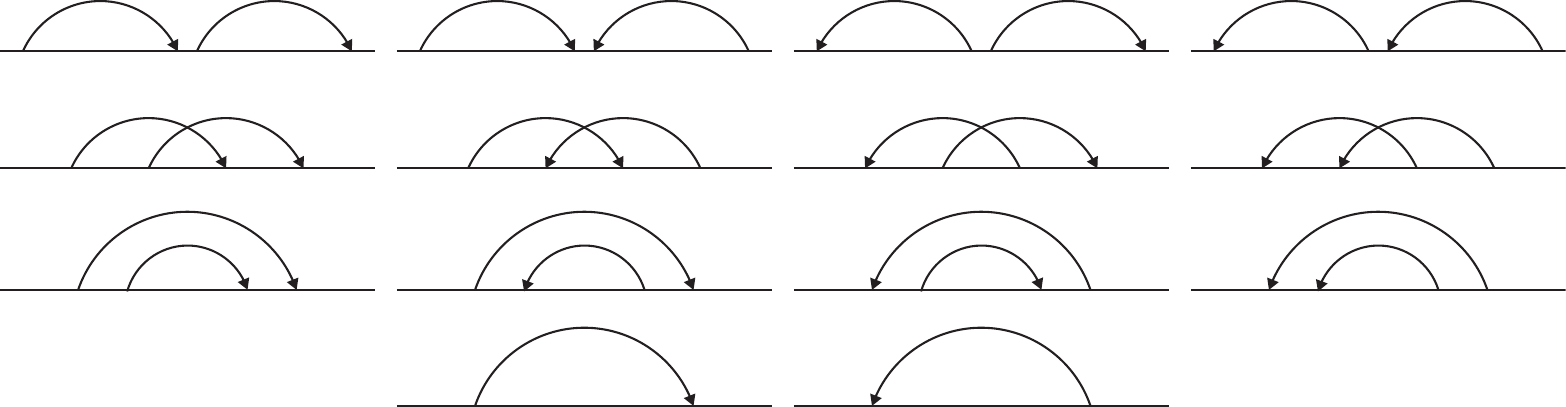}}
     \put(-52,75){Disjoint}
     \put(-52,52){Interleaved}
     \put(-52,29){Nested}
     \put(-52,6){One-arrow}
    \end{picture}
    
    \caption{Types of basis diagrams for $\GD_2$. Each arrow can be positive or negative (dashed or solid), which totals to 52 distinct basis diagrams. }
    \label{fig:Two arrow diagrams}
\end{figure}

The interleaved diagrams play an important role in the computation of $\omega_{V_2}$, so we give a naming convention for these diagrams. We name each interleaved diagram by the direction and sign of its leftmost and rightmost arrows. An interleaved diagram consists of two arrows with endpoints at positions 1, 2, 3, 4 on the parameterizing line, one arrow connecting positions 1 and 3 and the other connecting 2 and 4. We label each diagram $XY^{ab}$, where X and Y record whether the leftmost and rightmost arrows point right (R) or left (L), and a and b record the sign of the leftmost and rightmost arrows, respectively. For Example, $LR^{+-}$ denotes the diagram with a positive arrow pointing from positions 3 to 1 and a negative arrow pointing from positions 2 to 4.

In the following theorem, we compute the values of $\omega_{V_2}$ on a basis for $\GD_2$, \emph{almost}. We are able to show that most basis vectors map to zero, some map to determined values, and several satisfy a 1-parameter relation.
After the theorem and proof, we discuss this parameter and address the question of uniqueness of $\omega_{V_2}$.

\begin{thm}\label{thm:main_Conway}
    The map $\omega_{V_2}:\GD_2 \rightarrow \Q$ satisfies the following properties on a basis for $\GD_2$:
    \begin{itemize}
        \item Every one-arrow diagram maps to 0.
        \item Every nested two-arrow diagram maps to 0.
        \item Every disjoint two-arrow diagram maps to 0.
        \item On the interleaved two-arrow diagrams, $RR^{++},RR^{+-},RR^{-+},RR^{--},LL^{++}$ and $LL^{--}$ are all mapped to 0, and 
        the remaining diagrams satisfy a 1-parameter family of equations in $t$ given by the following parametric equations:
        \begin{align*}
            \omega_{V_2}(RL^{++})&=\omega_{V_2}(RL^{+-})=\frac{t}{2} & \omega_{V_2}(RL^{-+})&=\omega_{V_2}(RL^{--})=-\frac{t}{2} & &\\
            \omega_{V_2}(LR^{++})&=-\omega_{V_2}(LR^{+-})=1-\frac{t}{2}  & \omega_{V_2}(LR^{--})&=-\omega_{V_2}(LR^{-+})=1+\frac{t}{2}&  &\\
            \omega_{V_2}(LL^{-+})&= - \omega_{V_2}(LL^{+-}) =t
        \end{align*}
        
    \end{itemize}
\end{thm}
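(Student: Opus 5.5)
The plan is to read off linear constraints on $\omega_{V_2}$ from knots whose $V_2$ we know, using the GPV Theorem (Theorem \ref{thm:FactorsThruPhi}). It asserts a linear $\omega_{V_2}$ with $V_2=\omega_{V_2}\circ\varphi_2$. So every long knot diagram $K$ with known $V_2(K)$ yields one linear equation $\sum_D c_D(K)\,\omega_{V_2}(D)=V_2(K)$. Here $c_D(K)$ is the number of times the basis diagram $D$ appears as a subdiagram of the Gauss diagram of $K$. Strictly, $\omega_{V_2}$ is defined on $\GD_2$, which includes the empty diagram. Since the statement only concerns the 52 listed basis vectors, I also impose $\omega_{V_2}(\text{empty})=0$, using the unknot diagram with no crossings. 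The proof then amounts to choosing enough diagrams to pin down this system and solving it.

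\textbf{One-arrow diagrams.} I use unknot diagrams built from a single kink (Reidemeister I curl) of each of the four types. Each has a one-crossing Gauss diagram, so $\varphi_2$ of it is just the corresponding one-arrow diagram, and $V_2=0$ for the unknot. This forces all four one-arrow values to be $0$.

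\textbf{Disjoint and nested diagrams.} I use unknot diagrams consisting of two kinks. Placing the kinks in sequence gives every disjoint pattern with arbitrary signs and directions. Placing one kink inside the loop of the other gives the nested patterns. Since the one-arrow values are already $0$, each such diagram gives $\omega_{V_2}(D)=0$ for its single two-arrow subdiagram $D$. Some nested diagrams may not arise from nested kinks. For those I would use Reidemeister II configurations on the unknot, for instance two strands pushed across each other, which give two oppositely signed arrows. If needed I would add the cyclic rotations of the basepoint on such diagrams, since rotating the basepoint turns disjoint and nested patterns into one another.

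\textbf{Interleaved diagrams.} The remaining 16 unknowns are constrained by several sources:
\begin{itemize}
\item Reidemeister II on two strands of the unknot, which produces interleaved pairs of opposite sign, with several choices of which strand goes over and of the relative orientation of the strands.
\item Unknot diagrams with Reidemeister III triples, which have three arrows and hence three two-arrow subdiagrams each.
\item Long trefoils (both chiralities, $V_2=1$) and long figure-eight knots ($V_2=-1$), each with several basepoint placements.
\item Twisted unknots, for example a standard two-crossing curl that is not a simple kink.
\end{itemize}
For each of these I list the $\binom{n}{2}$ two-arrow subdiagrams and drop the known zeros, which leaves equations in the interleaved unknowns only. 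I then row reduce. The goal is a rank-15 system whose solution set is exactly the stated line parametrized by $t$. In that solution, $RR^{\pm\pm}$, $LL^{++}$ and $LL^{--}$ vanish, and the remaining values are affine in $t$. For example, the trefoil with a suitable basepoint should give a single $LR^{++}$ subdiagram plus zeros, which fixes the $1-\frac{t}{2}$ pattern once $RL^{++}=\frac{t}{2}$ is set as the parameter.

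\textbf{Main obstacle.} The main obstacle is generating enough independent equations for the interleaved block. The sign and orientation bookkeeping is error-prone, especially deciding which arrow is the leftmost and which way each arrow points after reparametrization. To manage this, I would first exploit the structure of Reidemeister II and III moves. An R2 move creates two arrows with the same head component, the same tail component and opposite signs. Distinguishing whether the resulting pair is interleaved or nested gives relations of the form $\omega(XY^{+-})=-\omega(XY^{-+})$ or $\omega(XY^{+-})+\omega(XY^{-+})=0$, which account for the sign-flip symmetries seen in the statement. The trefoil and figure-eight knots then supply the inhomogeneous constants. I expect the computations to leave exactly one free direction, which is $t$, because no equation coming from actual knot diagrams can separate it.

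Finally, the statement only claims that $\omega_{V_2}$ satisfies these properties, not that every value in the family is realized. So it suffices to show that any $\omega$ with $V_2=\omega\circ\varphi_2$ must satisfy these equations, which is exactly what the derived linear system gives.
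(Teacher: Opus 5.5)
\textbf{The interleaved step fails.} Your handling of the one-arrow, disjoint and nested diagrams is the same as the paper's (single kinks, then two-crossing unknots), and the remark about the empty diagram is a harmless refinement. The interleaved block, however, is the whole content of the statement, and there you do not produce a system. You only predict that it will have rank $15$ with the stated line as its solution set. That prediction cannot come true, because the listed relations are not consequences of $V_2=\omega\circ\varphi_2$.

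\emph{The relations contradict mirror invariance.} Let $\overline{M}$ change every crossing of a long knot diagram.
\begin{itemize}
\item The result represents the mirror image, so $V_2\circ\overline{M}=V_2$.
\item On Gauss diagrams $\overline{M}$ reverses every arrow, flips every sign, and commutes with $\varphi_2$. Hence $\omega\circ\overline{M}$ is a GPV map for $V_2$ whenever $\omega$ is.
\item On interleaved diagrams, $\overline{M}(RL^{ab})=LR^{-a,-b}$.
\end{itemize}
Suppose both $\omega$ and $\omega\circ\overline{M}$ obeyed the theorem. Then $\omega\circ\overline{M}(RL^{++})=\omega\circ\overline{M}(RL^{+-})$ says $\omega(LR^{--})=\omega(LR^{-+})$, i.e.\ $1+\frac{t}{2}=-(1+\frac{t}{2})$, so $t=-2$. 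But $\omega\circ\overline{M}(RL^{-+})=\omega\circ\overline{M}(RL^{--})$ says $\omega(LR^{+-})=\omega(LR^{++})$, so $t=2$.

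\emph{An explicit valid map outside the family.} The $t=0$ member of the family ($LR^{ab}\mapsto ab$, all other basis diagrams $\mapsto 0$) is the Polyak--Viro formula. Its mirror ($RL^{ab}\mapsto ab$, all others $\mapsto 0$) is also a valid GPV map:
\begin{itemize}
\item Both maps vanish on descending diagrams, where every arrow points right, as does $V_2$.
\item Changing the crossing at an arrow with endpoints $p<q$ from $K_-$ to $K_+$ changes either sum by $\sum\varepsilon_d$. The sum runs over arrows $d$ with exactly one endpoint in $(p,q)$, that endpoint being the tail (for $LR$) or the head (for $RL$).
\item Either sum equals $lk(K_0)$, which is exactly the jump of $V_2$ in the Conway skein relation.
\end{itemize}
The second map has $\omega(RL^{++})=1\neq-1=\omega(RL^{+-})$. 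So the stated line omits a genuine GPV map and cannot be the solution set of correctly computed equations. A rank-$15$ reduction landing on it, as you anticipate and as the paper reports, must rest on a computational error.

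\textbf{Your shortcuts are also wrong}, so they would not help even for a corrected statement.
\begin{itemize}
\item A Reidemeister II move on the crossingless unknot gives a two-crossing diagram. By Gauss's parity condition (every chord meets an even number of others), its two arrows cannot be interleaved.
\item More generally, an interleaved pair never occurs alone. Any Reidemeister II equation involving one therefore also contains cross terms $\omega(D^{+})+\omega(D^{-})$ with the remaining arrows. These need not vanish; for example, $\omega(RL^{++})+\omega(RL^{+-})=t$ in the stated family.
\item A Reidemeister II pair contributes a single diagram, not a relation $\omega(XY^{+-})=-\omega(XY^{-+})$. That relation fails for $LR$ even by the statement's own values, where $\omega(LR^{+-})+\omega(LR^{-+})=-2$.
\item The trefoil's three arrows pairwise interleave with alternating over/under data. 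So every basepoint produces one $RL$, one $LR$, and one $RR$ or $LL$ pair. A lone $LR^{++}$ plus zeros would force $t=0$.
\end{itemize}
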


\begin{proof}

     First, we show that all one-arrow diagrams map to 0 by $\omega_{V_2}$. 
     Let $D_{\uparrow}$ be the Gauss diagram for an unknot with one crossing, i.e., start with the standard unknot and add one kink. So $D_{\uparrow}$ is a diagram with one arrow.
      The Conway Polynomial of the unknot is 1,  so  $\omega_{V_2}(D_\uparrow)=0$.
      For example,  Figure \ref{fig:Zero Map}(top) shows the unknot that has \OneOtherArrow as its Gauss diagram.
      Any one-arrow Gauss diagram arises as the Gauss diagram for an unknot with one kink by choosing different orientations and positive versus negative kinks to add to the unknot.
      So, every one-arrow diagram in the basis for $\GD_2$ maps to 0 under $\omega_{V_2}$.

      \begin{equation}\label{eq:OV_2_single}
          \omega_{V_2}(\text{one-arrow diagram})=0
      \end{equation}
      
    \begin{figure}[h!]
        \centering
        \begin{picture}(190, 70)
            \put(-100, 0){\includegraphics[width=0.85\linewidth]{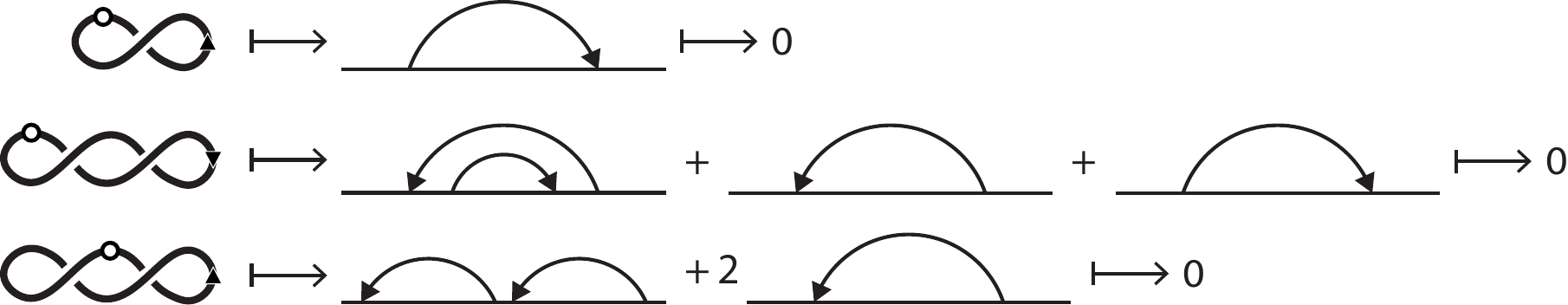}}
            \put(-36, 70){$\varphi_2$}
            \put(69, 71){$\omega_{V_2}$}
            \put(-36, 41){$\varphi_2$}
            \put(260, 42){$\omega_{V_2}$}
            \put(-36, 12){$\varphi_2$}
            \put(170, 14){$\omega_{V_2}$}
        \end{picture}
        \caption{Example of nested and disjoint crossings formed by unknots.}
        \label{fig:Zero Map}
    \end{figure}
    
    Next, we prove all nested and disjoint diagrams also map to 0 by $\omega_{V_2}$. 
    Notice that any knot with two crossings is the unknot, and thus the Gauss diagram for any two-crossing unknot will have exactly two arrows. Let $D_{\uparrow\uparrow}$ be the Gauss diagram for a two-crossing unknot. So, $\varphi_2(D_{\uparrow\uparrow})$  will output exactly one two-arrow subdiagram (which is the entire diagram $D_{\uparrow\uparrow}$) and two one-arrow subdiagrams, which we denote $D_{\uparrow}$ and $D'_{\uparrow}$. See Figure \ref{fig:Zero Map} (middle and bottom) for two examples. Thus, for any two-arrow Gauss diagram, we get the following equation
\begin{equation}\label{eq:phi_2_arrow}
    \varphi_2(D_{\uparrow\uparrow})= D_{\uparrow\uparrow}+D_{\uparrow}+D'_{\uparrow}.
\end{equation}
    We claim that all nested and disjoint diagrams arise as Gauss diagrams for two crossing unknots. Figure \ref{fig:nested_disjoint_unknots} shows four diagrams, each with two intersections. Choosing different orientations and any choice of positive or negative crossings for the intersections yields 32 different two-crossing unknots. A quick verification shows that all nested and disjoint diagrams occur as the Gauss diagrams for these unknots.

    \begin{figure}[h!]
    \centering
    \includegraphics[width=.55\linewidth]{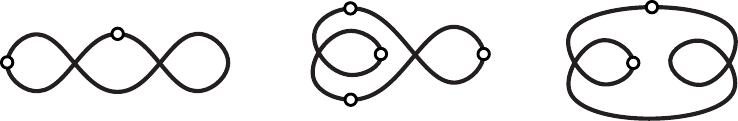}
    \caption{Types of unknots that form all 2-arrow nested and disjoint Gauss diagrams}
    \label{fig:nested_disjoint_unknots}
\end{figure}

    Let $D_{\uparrow \uparrow}$ be either a nested or disjoint two-arrow Gauss diagram. As described above, $D_{\uparrow \uparrow}$ is the Gauss diagram for a two-crossing unknot, and 
    \begin{align*}
0=V_2(\text{unknot})&\stackrel{GPV}{=}\omega_{V_2}(\varphi_2(D_{\uparrow\uparrow}))\\
&\stackrel{(\ref{eq:phi_2_arrow})}{=}\omega_{V_2}(D_{\uparrow \uparrow}+D_{\uparrow } +D'_{\uparrow} )\\
&=\omega_{V_2}(D_{\uparrow \uparrow})+\omega_{V_2}(D_{\uparrow } )+\omega_{V_2}(D'_{\uparrow} )\\
&  \stackrel{\ref{eq:OV_2_single}}{=}\omega_{V_2}(D_{\uparrow \uparrow})+0+0
    \end{align*}
Thus, for any nested or disjoint two-arrow Gauss diagram $D_{\uparrow \uparrow}$, we have that
    \[
        \omega_{V_2}(D_{\uparrow \uparrow})=0.
    \]

    Now we have shown that all one-arrow, nested two-arrow, and disjoint two-arrow diagrams are mapped to zero by $\omega_{V_2}$. What remains are 16 interleaved diagrams which require nontrivial computation to determine their output values under $\omega_{V_2}$.

    Using the previously defined naming conventions, we choose the following order of the interleaved basis diagrams:
     \vspace{2.5mm}
    
        {\scriptsize
        $RR^{++}, RR^{+-}, RR^{-+}, RR^{--}, RL^{++}, RL^{+-}, RL^{-+}, RL^{--}, LR^{++}, LR^{+-}, LR^{-+}, LR^{--}, LL^{++}, LL^{+-}, LL^{-+}, LL^{--}$}
    
\vspace{2.5mm}
We build a system of linear equations for the outputs of $\omega_{V_2}$ by computing the Conway polynomial for 16 knots and expressing the outputs in terms of this ordered basis. 
We use the long knots shown in Figure \ref{fig:UsedKnots}: the trefoil, figure eight, $5_2$, and 13 unknots.
Because we are working with long knots, changing the basepoint results in different Gauss diagrams, but the Conway polynomial remains the same. We use this fact on the unknot to form several distinct Gauss diagrams from a single knot diagram, giving us additional linear equations from the same knot. Each basepoint used is marked by a circle in Figure \ref{fig:UsedKnots}. For each knot $K$ in Figure \ref{fig:UsedKnots}, we compute $\varphi_2(K)$ and expand the result in terms of the basis for $GD_2$. From the GPV theorem, we know that $\omega_{V_2}(\varphi_2(K)) = V_2(K)$, which gives a desired linear equation.
    
\begin{figure}[]
    \centering
    \begin{picture}(190, 110)
        \put(-100,10){\includegraphics[width=0.85\linewidth]{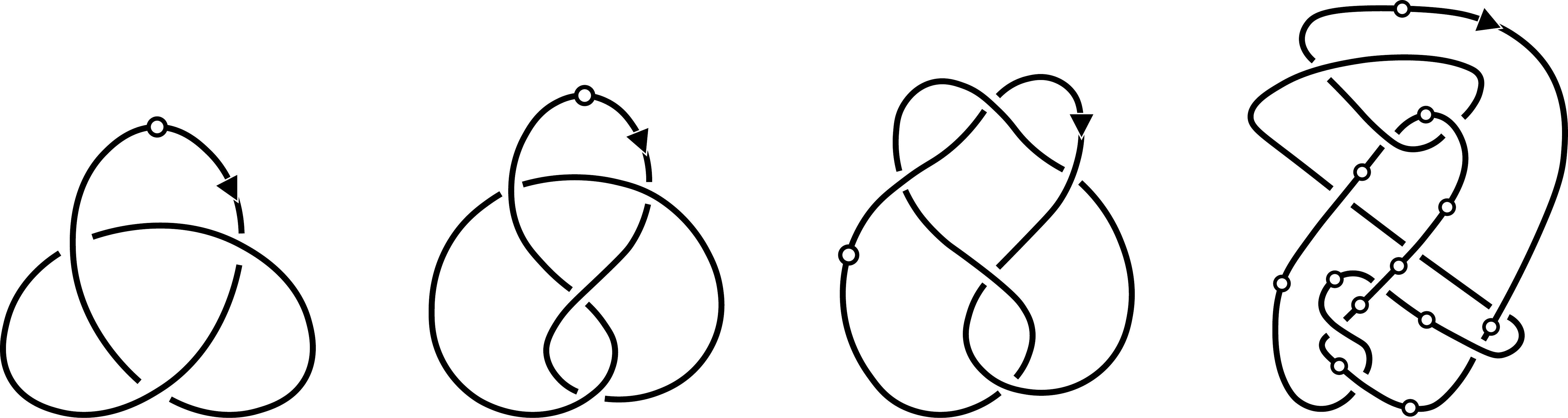}}
        \put(-75,0){\scalebox{0.8}{Trefoil}}
        \put(18,0){\scalebox{0.8}{Figure Eight}}
        \put(137,0){\scalebox{0.8}{$5_2$}}
        \put(232, 0){\scalebox{0.8}{unknot}}
    \end{picture}
    \caption{16 long knots used to build the linear system.  The unknot is ``the culprit" from \cite{HK}.}
    \label{fig:UsedKnots}
\end{figure}

For example, let $T$ be the trefoil in Figure \ref{fig:UsedKnots}. The Conway polynomial for $T$ has a quadratic coefficient of 1, so $V_2(T)=1$. On the other hand, $$\omega_{V_2}(\varphi_2(T))=\omega_{V_2}(RR^{--})+\omega_{V_2}(RL^{--})+\omega_{V_2}(LR^{--}),$$
which implies that $$\omega_{V_2}(RR^{--})+\omega_{V_2}(RL^{--})+\omega_{V_2}(LR^{--})=1.$$

For every long knot diagram in Figure \ref{fig:UsedKnots}, we get such a linear equation. We organize this system into an augmented matrix where each column corresponds to an interleaved basis diagram in the ordered basis and the final column is that knot's quadratic Conway coefficient.
So for example, the trefoil contributes the row $(0,0,0,1,0,0,0,1,0,0,0,1,0,0,0,0|1)$.
The first 13 rows are from the unknot, the 14th row is the trefoil, followed by the figure-eight and lastly the $5_2$ knot is the 16th row.
Before row reduction, the system is the following matrix.

\newpage

\begin{figure}[h]
    \centering
    \includegraphics[width=0.9\linewidth]{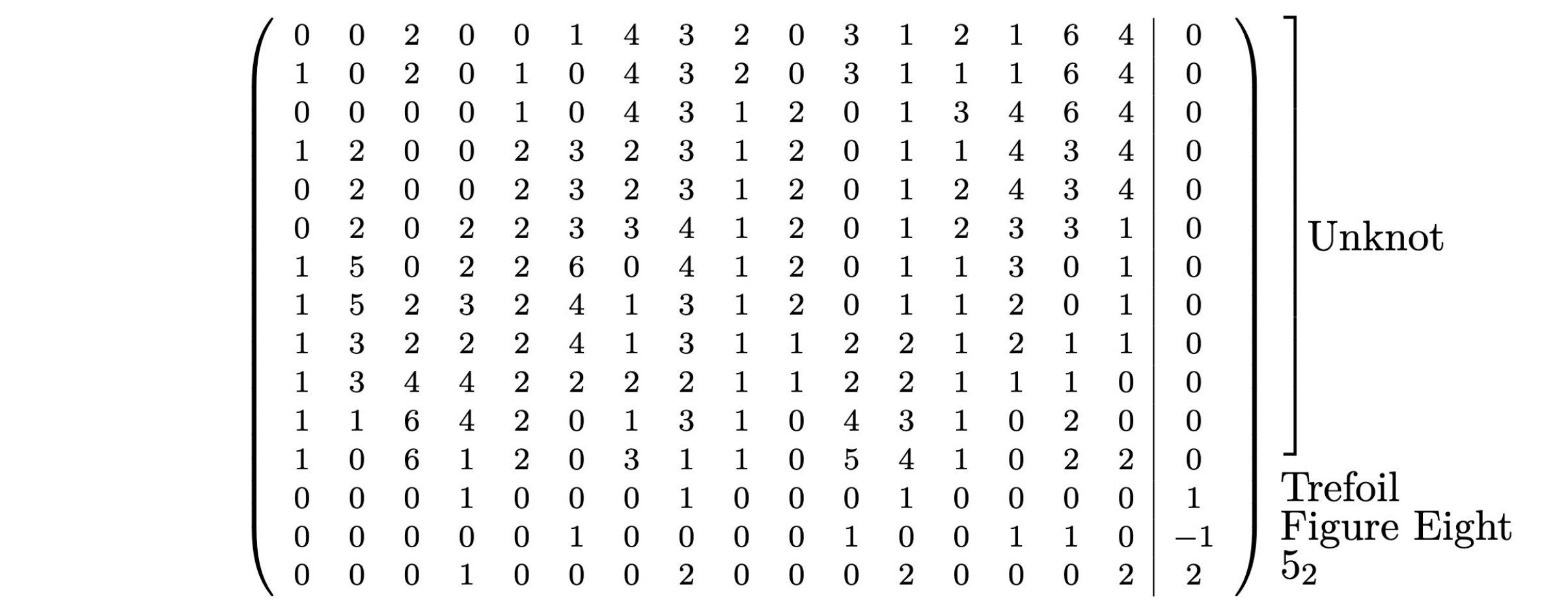}
    \label{fig:matrix}
\end{figure}

Row reducing the matrix leaves one free variable, so $\omega_{V_2}$ is determined on the interleaved diagrams up to one scalar. After row reduction, we found the following parametric equation of our solution:

\begin{equation*}
    \omega_{V_2}(RR^{++}), \omega_{V_2}(RR^{+-}), \omega_{V_2}(RR^{-+}), \omega_{V_2}(RR^{--}), \omega_{V_2}(LL^{++}), \omega_{V_2}(LL^{--})=0
\end{equation*}
\begin{align*}
    \omega_{V_2}(RL^{++})&=\omega_{V_2}(RL^{+-})=\frac{t}{2} &\omega_{V_2}(RL^{-+})&=\omega_{V_2}(RL^{--})=-\frac{t}{2} & &\\
    \omega_{V_2}(LR^{++})&=-\omega_{V_2}(LR^{+-})=1-\frac{t}{2}  &\omega_{V_2}(LR^{--})&=-\omega_{V_2}(LR^{-+})=1+\frac{t}{2}&  &\\
    \omega_{V_2}(LL^{-+})&= - \omega_{V_2}(LL^{+-}) =t
\end{align*}

\end{proof}

Now, let's address the question: is $\omega_{V_2}$ unique? As stated in the theorem, there is a parameter, called $t$, for which different values of $t$ could potentially give different $\omega_{V_2}$ maps that still satisfy the GPV theorem.
The results of our computation are inconclusive as to whether there is truly a one-parameter family of GPV maps for the quadratic coefficient of the Conway polynomial, or if we were not able to completely determine the system.

However, we don't expect $\omega_{V_2}$ to be unique! The GPV theorem does not guarantee uniqueness in any sense, and we conjecture that Theorem \ref{thm:main_Conway} accurately describes all possible $\omega_{V_2}$ maps.
For the reminder of this section, we explain some methods we attempted to use to determine a unique value of $t$. We encourage the reader to continue this pursuit to either determine a unique value of $t$, or verify that there are indeed infinitely many possible choices for $t$.


One computational method to prove there is only one GPV map -- so exactly one choice of $t$ that is determined by $V_2$ giving a unique $\omega_{V_2}$ -- would be to find one more knot whose expansion in terms of the 16 interleaved basis adds a linearly independent row to the augmented matrix shown in the proof above. 
We checked an additional 75 long knots including one long knot for each knot with 7 or less crossings, as well as $8_1$, $8_2$, $8_3$, $8_4$, $8_5$, $8_{11}$, $8_{13}$, $8_{16}$, $10_{18}$, $10_{41}$, $10_{46}$, $10_{134}$, $10_{139}$, the whitehead double of the figure eight knot, 30 long unknots, and different choices of starting points for several of these listed. We found all of these knots to be dependent on the knots we have presented in Figure \ref{fig:UsedKnots}.

Another method would be to exploit properties of the Conway polynomial directly.
The Conway Polynomial is proven to be invariant under mirror images of knots \cite{KAUF81}. There is a ``mirror image" map $M : \mathbb{K} \rightarrow  \mathbb{K}$ sending a long knot to its mirror image (change all positive crossings to negative and vice versa).
The fact that the Conway polynomial is invariant under mirror imaging can be expressed in the equation $\nabla\circ M=\nabla$, which is also true for $V_k$, the $k$'th coefficient of $\nabla$, $V_k\circ M=V_k$. This relationship can be expressed by the commutative diagram below.

\begin{equation}\label{small_diagram}
    \begin{tikzcd}
\mathbb{K} \arrow[r,"V_k"]\arrow[d, "M", swap]& \Q\\
\mathbb{K}\arrow[ru,"V_k", swap]\arrow[ru,"\circlearrowleft"]&
\end{tikzcd}
\end{equation}

The mirror image map on $\mathbb{K}$ induces a mirror imaging map, also denoted $M$, on $\GD_k$ for any $k$, which swaps the sign of all arrows in the Gauss diagram.
Putting all of this information together with the GPV Theorem, we get the following commuting diagram where the outer triangle is the diagram in Equation \ref{small_diagram}.

    \begin{equation}\label{big_diagram}
    \begin{tikzcd}
\mathbb{K} \arrow[r,"\varphi_k"]\arrow[d, "M", swap]\arrow[rr, "V_k",  controls={+(1.7,1.3) and +(-.5,0.8)}]\arrow[rr,phantom,near start, "\circlearrowleft",swap,  controls={+(1.7,1) and +(-.5,0.2)}]&\GD_k\arrow[r,"\omega_{V_k}"]\arrow[d,"M",swap] & \Q\\
\mathbb{K}\arrow[r,"\varphi_k"]\arrow[rru, "V_k", swap, controls={+(+1.7,-1.3) and +(0,-1.5)}]\arrow[rru,phantom, near start,  "\circlearrowleft", controls={+(+1.7,-1) and +(-.5,-1.2)}]\arrow[ru, phantom, "\circlearrowleft"]&\GD_k\arrow[ru,"\omega_{V_k}",swap] &
\end{tikzcd}
\end{equation}

It is tempting to think that the commutativity of three of the faces implies the commutativity of the last face, which is the triangle below.

\begin{equation}\label{small_diagram_2}
    \begin{tikzcd}
\GD_k \arrow[r,"\omega_{V_k}"]\arrow[d, "M", swap]& \Q\\
\GD_k\arrow[ru,"\omega_{V_k}", swap]\arrow[ru,"?"]&
\end{tikzcd}
\end{equation}

If this triangle commutes, we could conclude that $\omega_{V_k}\circ M=\omega_{V_k}$ on $\GD_k$.
However, from the diagram in Equation \ref{big_diagram}, all we can conclude is the commutativity of $\omega_{V_k}$ and $M$ on the image of $\varphi_k$.
That is, $\omega_{V_k}\circ M(D)=\omega_{V_k}(D)$ when $D$ is the image of a knot under $\varphi_k$.
\begin{equation}\label{small_diagram_3}
    \begin{tikzcd}
im(\varphi_k) \arrow[r,"\omega_{V_k}"]\arrow[d, "M", swap]& \Q\\
im(\varphi_k)\arrow[ru,"\omega_{V_k}", swap]\arrow[ru,"\circlearrowleft"]&
\end{tikzcd}
\end{equation}
Recall that $\K$ is the set of all long knot diagrams. To be explicitly clear, these are knot diagrams with a marked starting point and a chosen orientation for the direction of the parameterization.
The vector space $\GD_k$ is the set of all Gauss diagrams with $k$ or less arrows. 
The subdiagram map $\varphi_k:\K\rightarrow \GD_k$ is not surjective, as was described in Section \ref{sec:GDandGPV}. So, $im(\varphi_k)$ is a proper subset of $\GD_k$.

Why does this distinction matter? Well, let's focus on the case of $k=2$ for the quadratic coefficient.
Suppose we knew that the diagram in Equation \ref{small_diagram_2} commutes and that $\omega_{V_2}\circ M=\omega_{V_2}$ on all of $\GD_2$.
Then we can conclude that $\omega_{V_2}$ also preserves mirror image on every interleaved basis diagram (which these diagrams themselves are not in the image of $\varphi_2)$. 
The effect $M$ has on the interleaved basis diagrams is swapping the sign of every arrow, but not changing the arrow orientation. For example, we can see that $M(RL^{+-})=RL^{-+}$, which implies that $\omega_{V_2}(RL^{+-})=\omega_{V_2}(RL^{-+})$. This is an equation that can be added to the matrix the proof of Theorem \ref{thm:main_Conway} as a row of zeroes with 1 in the $RL^{+-}$ position and -1 in the $RL^{-+}$ position. On the interleaved basis of $\GD_2$, $M$ satisfies the following relations, each of which induces an equality that $\omega_{V_2}$ satisfies.

\begin{multicols}{3}
$M(RR^{++})=RR^{--}$

$M(RR^{+-})=RR^{-+}$

$M(RL^{++})=RL^{--}$

$M(RL^{+-})=RL^{-+}$

$M(LR^{++})=LR^{--}$

$M(LR^{+-})=LR^{-+}$

$M(LL^{++})=LL^{--}$

$M(LL^{+-})=LL^{-+}$
\end{multicols}

Adding these equations to our linear system completely determines the system and gives rise to the value of $t=0$. So, if one could show that $\omega_{V_2}\circ M=\omega_{V_2}$ on all of $\GD_k$, then there is a unique $\omega_{V_2}$.

However, one must proceed with caution.
We can also go through the same process with orientation reversal. The Conway Polynomial is proven to be invariant under orientation reversal \cite{KAUF81}. For example, if $\overrightarrow{K}$ and $\overleftarrow{K}$ are the same long knot diagram with opposite choices of orientation, then $\nabla(\overrightarrow{K})=\nabla(\overleftarrow{K})$. There is an orientation reversal map  denoted $OR$ and we get the analogous commutative diagram.

    \begin{equation}\label{big_diagram}
    \begin{tikzcd}
\mathbb{K} \arrow[r,"\varphi_k"]\arrow[d, "OR", swap]\arrow[rr, "V_k",  controls={+(1.7,1.3) and +(-.5,0.8)}]\arrow[rr,phantom,near start, "\circlearrowleft",swap,  controls={+(1.7,1) and +(-.5,0.2)}]&\GD_k\arrow[r,"\omega_{V_k}"]\arrow[d,"OR",swap] & \Q\\
\mathbb{K}\arrow[r,"\varphi_k"]\arrow[rru, "V_k", swap, controls={+(+1.7,-1.3) and +(0,-1.5)}]\arrow[rru,phantom, near start,  "\circlearrowleft", controls={+(+1.7,-1) and +(-.5,-1.2)}]\arrow[ru, phantom, "\circlearrowleft"]&\GD_k\arrow[ru,"\omega_{V_k}",swap] &
\end{tikzcd}
\end{equation}

We arrive at the same blockade, and we can conclude that $\omega_{V_k}\circ OR=\omega_{V_k}$ on $im(\varphi_k)$ and not necessarily on all of $\GD_k$.
Focusing on $k=2$, if we assume that the orientation reversing map  commutes with $\omega_{V_2}$ on all of $\GD_2$, this leads to an inconsistent system, contradicting the GPV theorem. So orientation reversing does not commute with $\omega_{V_k}$ on all of $\GD_2$.

\bibliography{GPV.bib}{}
\bibliographystyle{plain}

\end{document}